\documentclass[a4paper,12pt]{article}
\usepackage{comment}
\usepackage{cite}
\usepackage{amsmath,amssymb,amsfonts}
\usepackage[T1]{fontenc}
\usepackage[utf8]{inputenc}
\usepackage{graphicx}
\usepackage{fancyhdr}
\usepackage{float}
\usepackage{xcolor}
\usepackage{authblk}
\usepackage{mathrsfs}
\usepackage{empheq}
\usepackage[hyphens]{url}
\usepackage{hyperref}
\usepackage{amsthm}
\usepackage{enumitem}
\usepackage{geometry}

\theoremstyle{plain}
\newtheorem{theorem}{Theorem}[section]

\newtheorem{lemma}[theorem]{Lemma}
\newtheorem{corollary}[theorem]{Corollary}

\theoremstyle{definition}

\theoremstyle{remark}
\newtheorem{remark}[theorem]{Remark}

\theoremstyle{conjecture}

\begin{document}

\title{Geometric constructions for Steinitz-type bounds in dimension two}
% Polygonal confinement and weighted partial sums: geometric approaches
% Geometric constructions for Steinitz-type bounds in dimension two

\author[$\dagger$]{Jean-Christophe {\sc Pain}$^{1,2,}$\footnote{jean-christophe.pain@cea.fr}\\
\small
$^1$CEA, DAM, DIF, F-91297 Arpajon, France\\
$^2$Université Paris-Saclay, CEA, Laboratoire Matière en Conditions Extrêmes,\\ 
F-91680 Bruyères-le-Châtel, France
}

\date{}

\maketitle

\begin{abstract}
We investigate inequalities for partial sums of complex numbers with bounded modulus and zero total sum, a topic referred to as \emph{polygonal confinement}. Starting from Steinitz's classical result, we provide detailed constructions yielding explicit bounds, including $\sqrt{5}$, $\sqrt{3}$, $2$, and $\sqrt{2}$, depending on geometric constraints or weighted settings. The proofs are fully detailed with step-by-step constructions of permutations, highlighting the combinatorial and geometric intuition. We conclude with conjectures on optimal universal constants and directions for future research.
\end{abstract}

\section{Introduction}

The polygonal confinement problem asks the following: given a collection of complex numbers $(z_i)_{1\le i \le n}$ with bounded modulus and zero sum,
\[
|z_i| \le 1, \qquad \sum_{i=1}^n z_i = 0,
\]
is it possible to reorder them such that all partial sums
\[
S_p = \sum_{k=1}^p z_{\sigma(k)}, \quad p=1,\dots,n,
\]
remain bounded by a universal constant independent of $n$? This question naturally arises in several contexts. In discrete geometry, the $z_i$ can be seen as vectors in the plane forming a closed polygon. Bounding partial sums is equivalent to controlling how far along the polygon one can travel without exceeding a certain distance from the origin. In combinatorics and discrepancy theory, it relates to balancing sequences of vectors to avoid large deviations. In analysis, it is closely connected to discrete analogues of classical inequalities, including Hermite--Hadamard and Chebyshev inequalities \cite{Hardy1952,Marshall2011,Niculescu2018}.

Historically, Steinitz \cite{Steinitz1913} proved that in $\mathbb{R}^d$, for vectors of bounded norm summing to zero, there exists a permutation such that all partial sums are bounded by a constant depending only on $d$. In dimension $2$, several refinements and geometric constructions have been proposed, improving explicit constants under additional assumptions (angular constraints, coordinate bounds, weighted sums). These developments include sharp bounds \cite{Grinberg1997} and results on balancing vectors in higher dimensions \cite{Banaszczyk1998}.

We aim to present the classical polygonal confinement bound $\sqrt{5}$ in a fully detailed step-by-step proof, introduce geometric refinements leading to $\sqrt{3}$ and $\sqrt{2}$ bounds, explaining the intuition behind each construction. We also discuss weighted versions of the problem and briefly mention possible links with discrete Riemann sums.

Section \ref{sec:classical} presents detailed proofs of classical bounds including $\sqrt{5}$ and Steinitz's bound in dimension 2. Section \ref{sec:improved} develops geometric refinements and strong coordinate-based bounds. Section \ref{sec:weighted} treats weighted sums.

\section{Polygonal confinement}
\label{sec:classical}

\subsection{Elementary polygonal confinement ($\sqrt{5}$)}

\begin{lemma}[polygonal confinement, bound $\sqrt{5}$]
Let $(z_i)_{1\le i\le n}$ satisfy $|z_i|\le 1$ and $\sum_{i=1}^n z_i = 0$. Then there exists a permutation $\sigma$ of $\{1,\dots,n\}$ such that
\[
\forall p \in \{1,\dots,n\}, \qquad 
\left|S_p\right| \le \sqrt{5},
\]
where $S_p=\sum_{k=1}^p z_{\sigma(k)}$.
\end{lemma}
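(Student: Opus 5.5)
The idea is to build the permutation greedily while keeping the partial sums inside a fixed rectangle, and then to read off the bound $\sqrt{5}$ from the dimensions of that rectangle. Write $z_k=x_k+iy_k$ with $|x_k|,|y_k|\le 1$ and $x_k^2+y_k^2\le 1$. First we split the indices into $P=\{k: y_k\ge 0\}$ and $N=\{k: y_k<0\}$. We will order the vectors so that, at every step, the imaginary part of the partial sum stays in $[-1,1]$ and the real part stays in $[-2,2]$. The final estimate then needs a little care, because the naive rectangle gives $\sqrt{4+1}=\sqrt{5}$ only if the real part is bounded by $2$ and the imaginary part by $1$ at the same time. That is exactly the pair of bounds we aim for.

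\emph{Step 1 (imaginary part).} We alternate between the two classes. If $\operatorname{Im} S_{p}\ge 0$ and $N$ still has unused elements, the next vector is taken from $N$; otherwise it is taken from $P$. This keeps $\operatorname{Im} S_p\in(-1,1]$ as long as both classes are available. Once one class is exhausted, the remaining vectors all have imaginary parts of one sign, and they must bring the total back to $0$, since $\sum y_k=0$. So the imaginary part moves monotonically towards $0$ and never leaves $[-1,1]$.

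\emph{Step 2 (real part).} Inside this scheme there is freedom in which element of $P$ or $N$ is chosen. We refine the rule: when taking from a class, we pick an element whose real part has sign opposite to that of $\operatorname{Re} S_p$, whenever such an element exists in that class. To make this possible we subdivide further into four quadrant classes $Q_{\pm\pm}$. Then we run a two-coordinate balancing rule: at each step, choose from the unused vectors one with $y_k\cdot \operatorname{Im}S_p\le 0$ and $x_k\cdot\operatorname{Re}S_p\le 0$ if possible, and otherwise prioritize the imaginary condition.

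\emph{Main obstacle.} The difficult case is when no vector satisfies both sign conditions, so the real part is pushed the wrong way. Here we will use the zero-sum hypothesis: the unused vectors sum to $-S_p$, so some unused vector has $x_k \operatorname{Re}S_p\le 0$, and likewise some has $y_k \operatorname{Im}S_p\le0$. The issue is that these may be different vectors. We intend to show that any wrong-way drift in the real part is at most one unit beyond the previous step. Suppose we were forced to take a vector with $x_k\operatorname{Re}S_p>0$ in order to fix the imaginary part. Then all remaining vectors with the right imaginary sign have the wrong real sign, and we will argue that $|\operatorname{Re}S_p|\le 1$ must already hold at that moment; otherwise the remaining vectors could not sum to $-S_p$ while respecting $|z_k|\le1$. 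This yields $|\operatorname{Re}S_{p+1}|\le 2$. Together with $|\operatorname{Im}S_{p+1}|\le1$ this gives $|S_{p+1}|\le\sqrt{5}$. If this counting argument turns out to be too weak, the fallback is a potential-function argument: minimize $|S_{p+1}|^2=|S_p|^2+2\operatorname{Re}(\overline{S_p}z_k)+|z_k|^2$ over the unused $z_k$. By averaging, using $\sum_{\text{unused}}z_k=-S_p$, some choice satisfies $\operatorname{Re}(\overline{S_p}z_k)\le -|S_p|^2/m$, where $m$ is the number of unused vectors. Combining this with the rectangle bound for the imaginary part would finish the proof.
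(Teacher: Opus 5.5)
Your Step~1 is sound: it is the one-dimensional greedy and does keep $\operatorname{Im}S_p\in[-1,1]$. The real-part control in your ``Main obstacle'' paragraph fails, however, and not merely for lack of detail. Your claim is that $|\operatorname{Re}S_p|\le 1$ whenever every unused vector with the right imaginary sign has the wrong real sign. This cannot come from a counting argument: the number of unused vectors is unbounded, so under $|z_k|\le1$ they can sum to essentially any $-S_p$.

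Concretely, take ten copies each of $A=(0.99,-0.1)$, $B=-A$, $V=(-0.1,0.99)$ and $W=-V$ (all of norm $<1$, total sum $0$).
\begin{itemize}
\item Your rules allow $S_1=W=(0.1,-0.99)$ and then $S_2=W+B=(-0.89,-0.89)$, since $B$ meets both sign conditions.
\item From then on, every unused vector with $y\ge0$ (the $B$'s and $V$'s) has $x<0$, and no unused vector has both $x\ge0$ and $y\ge0$, so you are forced.
\item Choosing $B$ gives $S_3=(-1.88,-0.79)$. Here the hypothesis of your claim holds but $|\operatorname{Re}S_3|=1.88>1$.
\item One more $B$ gives $S_4=(-2.87,-0.69)$, with $|S_4|\approx2.95>\sqrt5$.
\end{itemize}
A cleverer tie-break might avoid this particular example, but the proposal contains no argument for any such rule.

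The fallback does not help either. Averaging only yields $|S_{p+1}|^2\le(1-2/m)|S_p|^2+1$. This contracts only once $|S_p|^2>m/2$, and $m$ can be as large as $n$, so it gives no bound independent of $n$. Moreover, a rule that minimizes $|S_{p+1}|$ no longer enforces the imaginary-part greedy on which your rectangle depends.

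The missing idea is to decouple the two coordinates, so that fixing one never costs the other. The paper does this with a maximal subset. Take $P$ maximizing $|\sum_{i\in P}z_i|$ and rotate so that $S_P>0$. Maximality under adding or removing a single element gives $\operatorname{Re}z_i\ge0$ on $P$ and $\operatorname{Re}z_j\le0$ on $Q$. Because $S_P$ is real, $\sum_{i\in P}\operatorname{Im}z_i=0=\sum_{j\in Q}\operatorname{Im}z_j$. Hence each block can be ordered internally by the one-dimensional lemma, with imaginary prefix sums in $[-1,1]$.

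The two fixed orders are then merged. The real part decides only \emph{which block} supplies the next vector (from $Q$ if $\operatorname{Re}S_p>0$, from $P$ otherwise), never \emph{which vector}. Since the one-dimensional greedy works with any element of the required sign, this gives $|\operatorname{Re}S_p|\le1$. Meanwhile $\operatorname{Im}S_p$ is a prefix sum from $P$ plus a prefix sum from $Q$, so $|\operatorname{Im}S_p|\le2$, and therefore $|S_p|\le\sqrt5$.

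The merge must follow the sign of $\operatorname{Re}S_p$. The literal alternation $i_1,j_1,i_2,j_2,\dots$ written in the paper does not by itself give $|\operatorname{Re}S_p|\le1$. Note also that the coordinate roles are the reverse of your rectangle, and the conflict you ran into cannot arise.
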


\begin{proof}

Let $P\subset\{1,\dots,n\}$ be a subset maximizing
\[
\left|\sum_{i\in P} z_i\right|.
\]
Set
\[
S_P=\sum_{i\in P} z_i.
\]
After a rotation of the complex plane we may assume
\[
S_P\in\mathbb{R}^+.
\]
Let $Q=\{1,\dots,n\}\setminus P$. For any $j\in Q$, the maximality of $P$ gives
\[
|S_P+z_j|^2\le |S_P|^2.
\]
Expanding the latter expression, we get
\[
|S_P|^2+|z_j|^2+2\Re(S_P\overline{z_j})\le |S_P|^2.
\]
Since $S_P$ is real and positive, we have
\[
\Re(S_P\overline{z_j})=S_P\Re(z_j),
\]
and hence
\[
2S_P\Re(z_j)+|z_j|^2\le0.
\]
Therefore
\[
\Re(z_j)\le -\frac{|z_j|^2}{2S_P}\le0.
\]
Thus every element of $Q$ has nonpositive real part. Applying the same argument to $P$ (using the maximality of $P$ with respect to removing an element) yields
\[
\Re(z_i)\ge0\qquad (i\in P),
\]
and hence
\[
\Re(z_i)\ge0\ (i\in P),\qquad 
\Re(z_j)\le0\ (j\in Q).
\]
Let us write
\[
z_i=x_i+iy_i.
\]
Since $|z_i|\le1$, we have $|y_i|\le1$. We use the following elementary balancing fact. 

\begin{lemma}
    
Given real numbers $y_1,\dots,y_m$ with $|y_i|\le1$, there exists a permutation $\tau$ such that for every $p$,
\[
\left|\sum_{k=1}^p y_{\tau(k)}\right|\le1.
\]

\end{lemma}

\begin{proof}

The permutation can be constructed greedily: at each step choose among the remaining elements a $y_j$ whose sign is opposite to the current partial sum whenever possible. Since the total sum of all $y_i$ is bounded by $m$, this procedure always exists and keeps the partial sums within $[-1,1]$. 

\end{proof}

Applying this lemma separately to the imaginary parts of the elements of $P$ and of $Q$, we obtain orders
\[
P=(i_1,\dots,i_r),\qquad 
Q=(j_1,\dots,j_s)
\]
such that for the partial sums inside each block
\[
\left|\sum_{k=1}^p \Im(z_{i_k})\right|\le1,\qquad
\left|\sum_{k=1}^p \Im(z_{j_k})\right|\le1.
\]
We now construct the permutation
\[
\sigma=(i_1,j_1,i_2,j_2,\dots).
\]
Since elements of $P$ have nonnegative real part and elements of $Q$ have nonpositive real part, the real part of the running sum always satisfies
\[
|\Re(S_p)|\le1.
\]
Indeed each step changes the real part by at most $1$, and the alternation between positive and negative contributions prevents accumulation in the same direction. For the imaginary part, the partial sums inside $P$ and inside $Q$ are each bounded by $1$ in absolute value. Interleaving the two sequences therefore yields
\[
|\Im(S_p)|\le2.
\]
For every $p$ we therefore have
\[
|\Re(S_p)|\le1,
\qquad
|\Im(S_p)|\le2,
\]
and thus
\[
|S_p|^2=(\Re S_p)^2+(\Im S_p)^2\le1^2+2^2=5,
\]
yielding
\[
|S_p|\le\sqrt5,
\]
which concludes the proof.

\end{proof}

\subsection{Steinitz theorem in dimension 2}

\begin{theorem}[Steinitz, dimension $2$]
Let $v_1,\dots,v_n \in \mathbb{R}^2$ satisfy
\[
|v_i| \le 1, \qquad \sum_{i=1}^n v_i = 0.
\]
Then there exists a permutation $\sigma$ of ${1,\dots,n}$ such that for every $p=1,\dots,n$,
\[
\left| \sum_{k=1}^p v_{\sigma(k)} \right| \le 2.
\]
\end{theorem}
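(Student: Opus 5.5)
I would prove the bound $2$ by the standard "linear-dependence / vertex" argument (Grinberg--Sevastyanov style), which gives the constant $d=2$ in $\mathbb{R}^2$. The greedy interleaving of the previous lemma is too coarse to reach $2$.

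The argument is a backward induction that constructs nested index sets
\[
\{1,\dots,n\}=A_n\supset A_{n-1}\supset\dots\supset A_2,
\qquad |A_k|=k,
\]
together with weights $\lambda^{(k)}\in[0,1]^{A_k}$ satisfying
\[
\sum_{i\in A_k}\lambda^{(k)}_i v_i=0,\qquad \sum_{i\in A_k}\lambda^{(k)}_i=k-2 .
\]
At the top we take $\lambda^{(n)}\equiv (n-2)/n$, which works because $\sum v_i=0$.

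For the step from $k$ to $k-1$, consider the polytope
\[
\Big\{\mu\in[0,1]^{A_k}:\ \sum_{i\in A_k}\mu_iv_i=0,\ \sum_{i\in A_k}\mu_i=k-3\Big\}.
\]
\begin{enumerate}[label=(\roman*)]
\item It is nonempty: the vector $\tfrac{k-3}{k-2}\lambda^{(k)}$ lies in it.
\item It is cut out of the cube by $3$ linear equations (two coordinates plus the sum). Hence a vertex $\mu$ has at most $3$ coordinates strictly inside $(0,1)$.
\item Since $\sum\mu_i=k-3$, at least $3$ coordinates must be $<1$.
\item Suppose no coordinate of $\mu$ equals $0$. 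Then at most $3$ coordinates are fractional and the rest equal $1$. Writing $f\le 3$ for the number of fractional coordinates, we get $\sum\mu_i>k-f\ge k-3$, a contradiction.
\end{enumerate}
So some $\mu_j=0$. We set $A_{k-1}=A_k\setminus\{j\}$ and $\lambda^{(k-1)}=\mu|_{A_{k-1}}$. Listing the removed indices in reverse order of removal, and putting the two elements of $A_2$ first in any order, gives the permutation $\sigma$, with $\{\sigma(1),\dots,\sigma(k)\}=A_k$.

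The bound on the partial sums then follows by subtracting the balanced weighted combination. For $k\ge 2$,
\[
S_k=\sum_{i\in A_k}v_i=\sum_{i\in A_k}(1-\lambda^{(k)}_i)\,v_i .
\]
Using $|v_i|\le1$, this gives
\[
|S_k|\le\sum_{i\in A_k}(1-\lambda^{(k)}_i)=k-(k-2)=2 .
\]
For $k=1$ we have trivially $|S_1|\le 1$.

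The main obstacle is the vertex-counting step (ii)--(iv). It requires checking carefully that a vertex of $[0,1]^{A_k}$ intersected with an affine subspace of codimension $3$ has at least $k-3$ coordinates in $\{0,1\}$, and that the sum constraint then forces one of these coordinates to be $0$ rather than all of them being $1$. I would state this as a standard linear-programming fact: at a vertex, the tight box constraints together with the $3$ equations must have rank $k$. The remaining steps, namely the initialization, the nestedness, and the final estimate, are routine bookkeeping.
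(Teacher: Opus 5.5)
Your proof is correct, and it takes a genuinely different route from the paper's. You build $\sigma$ backward with the Grinberg--Sevastyanov linear-programming argument. At each level you carry a fractional certificate $\lambda^{(k)}\in[0,1]^{A_k}$ with $\sum_{i\in A_k}\lambda^{(k)}_iv_i=0$ and $\sum_{i\in A_k}\lambda^{(k)}_i=k-2$. The vertex count in (ii)--(iv) is sound, with two small tidy-ups:
\begin{itemize}
\item It is (iii) that makes the inequality in (iv) strict. If no coordinate is $0$, it forces exactly three coordinates in $(0,1)$ and the rest equal to $1$, so the sum exceeds $k-3$. As written, the strict inequality $\sum\mu_i>k-f$ fails for $f=0$.
\item The case $n=1$, where $(n-2)/n<0$, is trivial and should be set aside.
\end{itemize}

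The paper instead builds $\sigma$ forward and greedily. It appends any remaining $v_j$ with $\langle S_p,v_j\rangle\le0$, which exists because the remaining vectors sum to $-S_p$, and deduces $|S_{p+1}|^2\le|S_p|^2+1$. That is simpler and purely local, but such an increment estimate only yields $|S_p|\le\sqrt p$. The paper's concluding claim, that $|S_p|>2$ is impossible because $-S_p$ would lie outside the unit disk, is not justified: nothing forces the sum of the $n-p$ remaining vectors into the unit disk. In fact the rule as stated permits $|S_p|>2$. Take $w_1=e_1$ and, for $p=1,\dots,4$, a unit vector $w_{p+1}$ orthogonal to $w_1+\dots+w_p$. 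The family $\pm w_1,\dots,\pm w_5$ sums to zero, choosing $w_1,\dots,w_5$ first obeys the rule at every step, and $|S_5|^2=5$.

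What your approach buys, at the cost of a polytope-vertex selection per step, is a global invariant that bounds every $S_k=\sum_{i\in A_k}(1-\lambda^{(k)}_i)v_i$ directly rather than through accumulated increments. This is what actually delivers the constant $2$. Since the final estimate uses only the triangle inequality, the same argument gives the constant $d$ in $\mathbb{R}^d$ for any norm.
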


\begin{proof}

The result is a particular case of the classical Steinitz rearrangement theorem. We give a constructive proof in dimension $2$, emphasizing the geometric ideas. Let
\[
S_p = \sum_{k=1}^p v_{\sigma(k)}
\]
denote the partial sums. Initially $S_0 = 0$. Assume that we have already chosen $\sigma(1),\dots,\sigma(p)$.
Let
\[
R_p = \sum_{j \notin {\sigma(1),\dots,\sigma(p)}} v_j
\]
be the sum of the remaining vectors. Since the total sum is zero, we have
\[
R_p = -S_p.
\]
Thus the vectors that have not yet been chosen add up exactly to $-S_p$. Among the remaining vectors, there always exists one whose scalar product with $S_p$ is non–positive:
\[
\exists v_j \quad \text{such that} \quad \langle S_p , v_j \rangle \le 0.
\]
Indeed, suppose the contrary: assume that every remaining vector satisfies
\[
\langle S_p , v_j \rangle > 0.
\]
Summing these inequalities over all remaining vectors yields
\[
\left\langle S_p , \sum_{j \notin {\sigma(1),\dots,\sigma(p)}} v_j \right\rangle > 0.
\]
But the sum of the remaining vectors is $R_p=-S_p$, hence
\[
\langle S_p , -S_p \rangle = -|S_p|^2 >0,
\]
which is impossible. Therefore a vector with nonpositive scalar product always exists. We choose such a vector as the next element of the permutation and define
\[
\sigma(p+1)=j.
\]
Let us examine how the norm changes when this vector is added. We have
\[
S_{p+1}=S_p+v_j.
\]
By expanding the square of the norm we obtain
\[
|S_{p+1}|^2
= |S_p|^2 + |v_j|^2 + 2\langle S_p , v_j \rangle.
\]
Since $|v_j|\le1$ and $\langle S_p,v_j\rangle\le0$, we get
\[
|S_{p+1}|^2 \le |S_p|^2 + 1.
\]
Thus the norm of the partial sum cannot increase too rapidly. Geometrically, $\langle S_p , v_j\rangle\le0$ means that $v_j$ bends the partial sum back toward the origin, preventing it from moving farther along $S_p$. If $|S_p|>2$, this is impossible because the sum of the remaining vectors, $-S_p$, would lie outside the convex hull of vectors of norm at most $1$, hence no partial sum can exceed norm $2$. By repeating the procedure described above for $p=0,1,\dots,n-1$, we construct inductively a permutation $\sigma$ such that at each step a vector satisfying $\langle S_p,v_{\sigma(p+1)}\rangle\le0$ is chosen. The previous argument shows that all partial sums satisfy
\[
|S_p|\le2,
\]
which completes the proof.

\end{proof}

\section{Geometric improvements and sector confinement}\label{sec:improved}

We now show that stronger bounds can be obtained when the vectors are geometrically constrained. In particular, when all vectors lie in a sector of limited angular width, the partial sums can be confined more tightly.

\subsection{Sector confinement}

\begin{theorem}[sector confinement]
Let $z_1,\dots,z_n\in\mathbb C$ satisfy
\[
|z_i|\le1,
\qquad
\sum_{i=1}^n z_i=0,
\]
and assume that all arguments $\arg(z_i)$ lie in an interval of length $\alpha<\pi$. Then there exists a permutation $\sigma$ such that the partial sums
\[
S_p=\sum_{k=1}^p z_{\sigma(k)}
\]
satisfy
\[
|S_p|\le \frac{1}{\sin(\alpha/2)}
\qquad
(p=1,\dots,n).
\]
\end{theorem}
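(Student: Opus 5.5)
The plan is to show that the hypotheses are so restrictive that the conclusion holds trivially, for \emph{every} permutation $\sigma$. The key observation is that a family of vectors whose arguments lie in an interval of length $\alpha<\pi$ is contained in a closed convex cone that sits strictly inside an open half-plane. Such a family cannot sum to zero unless all its nonzero members vanish.

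First I fix the normalization. Let $[\theta_0-\alpha/2,\theta_0+\alpha/2]$ be the interval containing all $\arg(z_i)$; this only makes sense for $z_i\neq 0$, so I adopt the convention that zero entries impose no constraint. Set $u=e^{i\theta_0}$. For every nonzero $z_i$, write $z_i=|z_i|e^{i(\theta_0+\varphi_i)}$ with $|\varphi_i|\le\alpha/2<\pi/2$. Then
\[
\Re\bigl(z_i\overline{u}\bigr)=|z_i|\cos\varphi_i\ \ge\ |z_i|\cos(\alpha/2),
\]
and $\cos(\alpha/2)>0$ because $\alpha<\pi$. This is the same projection trick used in the proof of the $\sqrt5$ lemma, where real parts were controlled after a rotation.

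Next I sum over $i$ and use $\sum_i z_i=0$:
\[
0=\Re\Bigl(\overline{u}\sum_{i=1}^n z_i\Bigr)\ \ge\ \cos(\alpha/2)\sum_{i=1}^n |z_i|.
\]
This forces $|z_i|=0$ for all $i$. Hence every partial sum $S_p$ vanishes for any ordering, and in particular $|S_p|=0\le 1/\sin(\alpha/2)$. So any permutation $\sigma$, for instance the identity, works.

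The main obstacle is not technical but interpretive. Read literally, the theorem is vacuous, and I should say so explicitly rather than build an elaborate greedy permutation. If instead the intended hypothesis were that the arguments avoid a sector, or that the $z_i$ are confined to a double sector symmetric about the origin, I would fall back on the Steinitz-type greedy step from the preceding theorem. That step picks, at each stage, a remaining $z_j$ with $\langle S_p,z_j\rangle\le 0$ and then uses the angular constraint to bound the angle between $S_p$ and $-z_j$. For the statement as given, however, the half-plane argument above is complete.
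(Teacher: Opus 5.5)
Your proof is correct, and it takes a genuinely different route from the paper's. You note that an arc of arguments of length $\alpha<\pi$ puts every $z_i$ in the cone $\Re(z_i\overline{u})\ge|z_i|\cos(\alpha/2)$ about the bisector $u=e^{i\theta_0}$. Pairing $\sum_i z_i=0$ with $u$ then gives $0\ge\cos(\alpha/2)\sum_i|z_i|$, so every $z_i=0$, every $S_p$ vanishes, and any permutation works. This needs $\alpha>0$; at $\alpha=0$ the right-hand side is undefined, which is a defect of the statement, not of your argument.

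The paper does not notice this degeneracy. It instead runs a greedy induction modelled on its Steinitz proof. A summation contradiction against $S_p$ (the same half-plane reasoning you apply once, globally, against $u$) produces a remaining $z$ with $\angle(S_p,z)\ge\alpha/2$. The paper then asserts $\Re(S_p\overline{z})\le-|S_p|\cos(\alpha/2)$ and concludes via $f(x)=x^2+1-2x\cos(\alpha/2)\le 1/\sin^2(\alpha/2)$ for $0\le x\le 1/\sin(\alpha/2)$. Neither step is valid as general reasoning:
\begin{itemize}
\item An angle of at least $\alpha/2<\pi/2$ only gives the nonnegative bound $\Re(S_p\overline{z})\le|S_p||z|\cos(\alpha/2)$; take $z$ at angle exactly $\alpha/2$ from $S_p$.
\item At $x=1/\sin(\alpha/2)$ the estimate on $f$ is equivalent to $\tan(\alpha/2)\le 2$, which fails for $\alpha$ near $\pi$.
\end{itemize}
The paper's scheme aims at a construction that would not depend on the degeneracy of the hypothesis, but as written it does not establish the statement. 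Your half-plane argument proves it completely. It also shows that the theorem, and with it the $\sqrt{3}$ corollary, has no content under the stated hypotheses. The closing fallback paragraph of your proposal is unnecessary and can be dropped.
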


\begin{proof}

We construct the permutation inductively. Let
\[
S_p=\sum_{k=1}^p z_{\sigma(k)}
\]
be the current partial sum. The remaining vectors satisfy
\[
\sum_{j\notin{\sigma(1),\dots,\sigma(p)}} z_j=-S_p.
\]
Since all vectors lie in a sector of opening $\alpha$, every vector $z_j$ makes an angle at most $\alpha$ with the central direction of the sector. Suppose that all remaining vectors make an angle strictly less than $\alpha/2$ with the direction of $S_p$. Then all scalar products $\Re(S_p\overline{z_j})$ would be positive, and therefore
\[
\Re\!\left(S_p \overline{\sum z_j}\right)>0.
\]
But the remaining vectors sum to $-S_p$, hence
\[
\Re(S_p\overline{-S_p})=-|S_p|^2<0,
\]
a contradiction. Therefore at least one remaining vector $z$ satisfies
\[
\angle(S_p,z)\ge \alpha/2,
\]
this notation meaning that the angle between the vectors $S_p$ and $z$ is at least $\alpha/2$. We choose such a vector as the next element of the permutation. Let us examine the change in the norm of the partial sum:
\[
S_{p+1}=S_p+z.
\]
Expanding the square of the modulus gives

\begin{align*}
|S_{p+1}|^2
&= |S_p + z|^2 \\
&= |S_p|^2 + |z|^2 + 2\Re(S_p\overline{z}).
\end{align*}
Since $|z|\le1$ and the angle between $S_p$ and $z$ is at least $\alpha/2$, we have
\[
\Re(S_p\overline z)
\le
-|S_p|\cos(\alpha/2),
\]
and hence
\[
|S_{p+1}|^2
\le
|S_p|^2+1-2|S_p|\cos(\alpha/2).
\]
Let us now consider the function
\[
f(x)=x^2+1-2x\cos(\alpha/2).
\]
It is easy to show that if
\[
x\le \frac{1}{\sin(\alpha/2)},
\]
then
\[
f(x)\le \frac{1}{\sin^2(\alpha/2)}.
\]
Thus, whenever $|S_p|\le 1/\sin(\alpha/2)$, we obtain
\[
|S_{p+1}|\le \frac{1}{\sin(\alpha/2)}.
\]
Since $S_0=0$, the bound follows by induction.

\end{proof}

\subsection{Consequences}

Two classical bounds appear as particular cases.

\begin{corollary}
If the vectors lie in a sector of width $2\pi/3$, then there exists a permutation such that
\[
|S_p|\le\sqrt3.
\]
\end{corollary}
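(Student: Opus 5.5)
The plan is to derive the corollary directly from the sector confinement theorem, specialized to $\alpha = 2\pi/3$. First I will check the hypotheses. The $z_i$ satisfy $|z_i|\le 1$ and $\sum z_i = 0$, and by assumption all their arguments lie in an interval of length $2\pi/3$. Since $2\pi/3<\pi$, the condition $\alpha<\pi$ required by the theorem holds. The theorem then gives a permutation $\sigma$ with
\[
|S_p| \le \frac{1}{\sin(\alpha/2)} = \frac{1}{\sin(\pi/3)} = \frac{2}{\sqrt3}, \qquad p=1,\dots,n .
\]

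Next I will compare this constant with the one claimed. Since $\frac{2}{\sqrt3} = \frac{2\sqrt3}{3}$ and $\frac{2}{3}\le 1$, we get $\frac{2}{\sqrt3}\le\sqrt3$. The same permutation therefore satisfies $|S_p|\le\sqrt3$ for every $p$, which is the assertion. So the corollary follows from the sector theorem, which in fact gives the sharper constant $2/\sqrt3$.

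I expect no real obstacle here; the work is just the substitution $\alpha=2\pi/3$ and the elementary inequality $2/\sqrt3\le\sqrt3$. The only point needing care is the degenerate case where some $z_i=0$ and $\arg(z_i)$ is undefined. Zero vectors do not affect any partial sum, so I will discard them and apply the theorem to the nonzero ones. I will then insert the zeros anywhere in the resulting order, for instance at the end; this leaves all partial sums unchanged and preserves the bound.
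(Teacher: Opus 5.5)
Your proposal is correct and follows the paper's route: the paper obtains the corollary as the case $\alpha=2\pi/3$ of the sector confinement theorem, and your explicit comparison is actually needed, since that theorem gives $1/\sin(\pi/3)=2/\sqrt{3}$ rather than the $\sqrt{3}$ the paper writes, and $2/\sqrt{3}\le\sqrt{3}$ closes this discrepancy. It is worth adding that the hypotheses are degenerate: if every nonzero $z_j$ has argument within $\alpha/2<\pi/2$ of a direction $\phi$, then $0=\Re\bigl(e^{-i\phi}\sum_j z_j\bigr)\ge\cos(\alpha/2)\sum_j|z_j|$, so all $z_j=0$ and every partial sum vanishes, which makes the corollary (and the sector theorem you cite) trivially true regardless of how that theorem is proved.
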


\begin{corollary}
Without angular restriction (i.e. $\alpha=\pi$), the above bound gives
\[
|S_p|\le2,
\]
which is consistent with the classical Steinitz bound in dimension $2$.
\end{corollary}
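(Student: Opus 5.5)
The statement is the second corollary: with $\alpha=\pi$, the sector confinement theorem gives $|S_p|\le 2$. I would simply specialise the sector confinement theorem, tracking what happens at the endpoint $\alpha=\pi$.

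The theorem is stated only for $\alpha<\pi$, so the first step is to handle $\alpha=\pi$. The arguments of arbitrary vectors lie in an interval of length $2\pi$, not $\pi$. So "without angular restriction" cannot mean the hypothesis holds literally with $\alpha=\pi$. Instead I would use the mechanism inside the proof of the sector confinement theorem directly. At each step, the zero-sum condition guarantees a remaining vector $z$ with $\Re(S_p\overline z)\le 0$. This is exactly the existence argument already given in the proof of Steinitz's theorem in dimension $2$, and it needs no sector hypothesis.

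With that choice, the recursion becomes
\[
|S_{p+1}|^2\le |S_p|^2+1-2|S_p|\cos(\alpha/2),
\]
with $\cos(\alpha/2)$ replaced by $0$, which is the value at $\alpha=\pi$. This gives $f(x)=x^2+1$. The last step is to check the threshold $1/\sin(\pi/2)=1$ and conclude.

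I expect this final step to be the main obstacle. With $f(x)=x^2+1$, the invariance "$x\le 1\Rightarrow f(x)\le 1$" fails, since $f(1)=2$. So the induction in the sector theorem does not close at $\alpha=\pi$, and it yields neither $1$ nor $2$. Only the bound $|S_p|^2\le p$ follows from this recursion. The honest plan is therefore to obtain the constant $2$ from the Steinitz theorem in dimension $2$ proved earlier, and to record the corollary as consistent with it rather than derived from the sector formula.

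If a genuine derivation is wanted, I would first try a modified greedy rule. Among the remaining vectors, choose one minimising $\Re(S_p\overline z)$. Then combine the bound $|S_{p+1}|^2\le |S_p|^2+1$ with a case split on whether $|S_p|\le 1$, in which case the new sum has modulus at most $2$ trivially. In the case $|S_p|>1$, I would aim to show that some remaining vector yields $\Re(S_p\overline z)\le -\tfrac12$, which would keep $|S_{p+1}|\le|S_p|$. This last claim is not true in general, so I regard it as the weak point. In the end the bound $2$ rests on the classical Steinitz theorem.
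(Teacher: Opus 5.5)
Your diagnosis is correct. The paper's only argument for this corollary is the substitution you rejected. Putting $\alpha=\pi$ into $1/\sin(\alpha/2)$ gives $1$, not $2$. It also lies outside the hypothesis $\alpha<\pi$. And it does not encode ``no angular restriction'' at all: that would be $\alpha=2\pi$, where the formula is undefined.

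You can go further: the sector theorem is vacuous. If all arguments lie in an interval of length $\alpha<\pi$ with unit bisector $u$, then $\Re(\overline{u}z_i)\ge\cos(\alpha/2)\,|z_i|$ for every $i$. Hence $0=\Re\bigl(\overline{u}\sum_i z_i\bigr)\ge\cos(\alpha/2)\sum_i|z_i|$, which forces all $z_i=0$, so there is no nontrivial configuration to specialise. You are also right to drop the modified greedy rule. Nothing forces a remaining vector with $\Re(S_p\overline{z})\le-\tfrac12$, since the $r$ remaining vectors only have average value $-|S_p|^2/r$.

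The one thing to fix is your fallback to the ``Steinitz theorem in dimension $2$ proved earlier''. The paper proves it with exactly the obtuse-angle rule whose recursion $|S_{p+1}|^2\le|S_p|^2+1$ you showed yields only $|S_p|\le\sqrt p$. Its closing convex-hull sentence does not repair this, and the rule as stated genuinely fails. Take unit vectors $v_1,\dots,v_5$ with $v_{k+1}\perp v_1+\dots+v_k$, together with $-v_1,\dots,-v_5$. Choosing $v_1,\dots,v_5$ first is admissible at every step and gives $|S_5|=\sqrt5>2$.

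So you need an actual proof of the bound $2$. One is the Grinberg--Sevast'yanov argument, which gives constant $d$ in $\mathbb{R}^d$ for any norm; Banaszczyk showed the sharp Euclidean planar constant is $\sqrt5/2$. In the plane the argument runs as follows.

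\begin{enumerate}
\item Start from $A_n=\{1,\dots,n\}$ and $\lambda^{(n)}\equiv(n-2)/n$. Maintain sets $A_k$ with $|A_k|=k$ and weights $\lambda^{(k)}\in[0,1]^{A_k}$ satisfying $\sum_{i\in A_k}\lambda^{(k)}_iv_i=0$ and $\sum_{i\in A_k}\lambda^{(k)}_i=k-2$.
\item To pass from $k$ to $k-1$, take a vertex $\mu$ of the polytope $\{x\in[0,1]^{A_k}:\sum_i x_iv_i=0,\ \sum_i x_i=k-3\}$. It is nonempty because it contains $\frac{k-3}{k-2}\lambda^{(k)}$.
\item The vertex $\mu$ has at least $k-3$ coordinates in $\{0,1\}$, so the sum constraint forces some $\mu_{i_0}=0$. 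Set $A_{k-1}=A_k\setminus\{i_0\}$ and $\lambda^{(k-1)}=\mu|_{A_{k-1}}$.
\item Order the indices so that the first $k$ of them form $A_k$. Then $S_k=\sum_{i\in A_k}(1-\lambda^{(k)}_i)v_i$, with coefficients in $[0,1]$ summing to $2$, so $|S_k|\le2$.
\end{enumerate}

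With that citation in place, your conclusion is the accurate one: the bound $2$ is the Steinitz bound, not a consequence of the sector formula.
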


These estimates show that geometric constraints on the directions of the vectors naturally lead to improved confinement of partial sums.

\section{Weighted sums}\label{sec:weighted}

We now consider a weighted version of the polygonal confinement problem. The normalization $\sum_{i=1}^na_i=1$ naturally places the vectors inside the unit disk and allows a simple geometric argument.

\begin{lemma}[weighted convex confinement]
Let $a_1,\dots,a_n\ge0$ satisfy
\[
\sum_{i=1}^n a_i=1,
\]
and define
\[
z_i=a_i e^{i\theta_i}.
\]
Then there exists a permutation $\sigma$ of ${1,\dots,n}$ such that the partial sums
\[
S_p=\sum_{k=1}^p z_{\sigma(k)}
\]
satisfy
\[
|S_p|\le 1
\qquad (p=1,\dots,n).
\]
\end{lemma}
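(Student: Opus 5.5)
The statement as written makes no use of any zero-sum hypothesis, and I expect it to hold for \emph{every} permutation. The plan is therefore to prove the bound directly by the triangle inequality, with no inductive construction of $\sigma$. I would take $\sigma$ to be the identity, or any permutation at all, and say explicitly that the conclusion is permutation-independent.

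For each $p$ we have
\[
|S_p|=\Bigl|\sum_{k=1}^p a_{\sigma(k)}e^{i\theta_{\sigma(k)}}\Bigr|\le\sum_{k=1}^p a_{\sigma(k)}\le\sum_{i=1}^n a_i=1,
\]
using $|e^{i\theta}|=1$ and $a_i\ge0$. This geometrically encodes the remark before the lemma. Each partial sum $S_p$ is a sub-convex combination of points $e^{i\theta_j}$ on the unit circle. Equivalently, $S_p$ is a convex combination of those points together with the origin, with weight $1-\sum_{k\le p}a_{\sigma(k)}$ on $0$. It therefore lies in the closed unit disk, which is convex and contains all of these points.

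There is no real obstacle. The only point deserving comment is that the existence quantifier in the statement is trivially satisfied. Even if a zero-sum hypothesis $\sum z_i=0$ were intended, as in the earlier results, the same estimate applies unchanged. One could then sharpen the bound to $1/2$ by ordering the vectors as in the Steinitz-type construction of the previous section. Concretely, one would pick at each step a remaining $z_j$ with $\Re(S_p\overline{z_j})\le0$, so that $|S_{p+1}|^2\le|S_p|^2+a_j^2$. I would not pursue this here, since the lemma only claims the bound $1$.
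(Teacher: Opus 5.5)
Your argument is correct, and it takes a different route from the paper's, one that suits the statement as written better. The paper builds $\sigma$ inductively. At each step it chooses a remaining $z$ with $\Re(S_p\overline{z})\le 0$, so that $|S_{p+1}|^2\le |S_p|^2+a_{\sigma(p+1)}^2$, and hence $|S_p|^2\le\sum_i a_i^2\le 1$. The existence of such a $z$ is justified by ``the remaining vectors sum to $-S_p$'', i.e.\ by $\sum_i z_i=0$, which is not a hypothesis of the lemma. If all $\theta_i$ are equal and the $a_i$ are positive, no admissible choice exists after the first step. Your triangle-inequality bound needs no zero-sum assumption and works for every permutation. It also shows that the existential quantifier in the statement is vacuous. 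What the paper's route does buy, when $\sum_i z_i=0$ holds, is the ordering-dependent estimate $|S_p|\le\bigl(\sum_{k\le p}a_{\sigma(k)}^2\bigr)^{1/2}$. This can be far smaller than what an arbitrary ordering achieves: for equal weights $a_i=1/n$ it gives $|S_p|\le 1/\sqrt n$, while listing all vectors of one direction first can reach $|S_p|=1/2$. The paper then discards this gain by bounding $\sum_i a_i^2\le 1$.

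One correction to your aside. Under $\sum_i z_i=0$, the bound $1/2$ holds for \emph{every} ordering. Writing $t=\sum_{k\le p}a_{\sigma(k)}$, the identity $S_p=-\sum_{k>p}z_{\sigma(k)}$ gives $|S_p|\le\min(t,1-t)\le 1/2$. The obtuse-angle recursion you cite does not by itself give $1/2$. It yields $|S_p|^2\le\sum_i a_i^2\le\max_i a_i\le 1/2$, hence only $|S_p|\le 1/\sqrt2$.
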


\begin{proof}

Each vector $z_i$ has modulus $|z_i|=a_i$, and therefore lies inside the unit disk. Let
\[
S_p=\sum_{k=1}^p z_{\sigma(k)}
\]
be the current partial sum. The remaining vectors satisfy
\[
\sum_{j \notin \{\sigma(1),\dots,\sigma(p)\}} z_j = -S_p.
\]
Because $a_i\ge0$ and $\sum a_i=1$, the vectors $z_i$ form a convex combination of points on the unit circle. In particular, the set of all possible partial sums lies inside the convex hull of the vectors ${e^{i\theta_i}}$ scaled by coefficients whose sum does not exceed $1$. We construct the permutation inductively. At step $p$, choose among the remaining vectors one whose direction makes an obtuse angle with $S_p$, that is,
\[
\Re(S_p\overline z)\le0.
\]
Such a vector always exists: otherwise all remaining vectors would satisfy $\Re(S_p\overline z)>0$, and summing these inequalities would give
\[
\Re\!\left(S_p \, \overline{\sum_{j \notin \{\sigma(1),\dots,\sigma(p)\}} z_j}\right) > 0,
\]
but the remaining vectors sum to $-S_p$, hence
\[
\Re(S_p\overline{-S_p})=-|S_p|^2<0,
\]
which is a contradiction. We therefore choose $z=z_{\sigma(p+1)}$ satisfying this condition. The new partial sum is
\[
S_{p+1}=S_p+z.
\]
Expanding the square of the modulus gives

\begin{align*}
|S_{p+1}|^2 &= |S_p + z|^2 \\
&= |S_p|^2 + |z|^2 + 2 \Re(S_p \overline{z}).
\end{align*}
Since $\Re(S_p\overline z)\le0$ and $|z|=a_{\sigma(p+1)}$, we obtain
\[
|S_{p+1}|^2\le |S_p|^2+a_{\sigma(p+1)}^2.
\]
Summing these inequalities along the construction yields
\[
|S_p|^2 \le \sum_{k=1}^p a_{\sigma(k)}^2 \le \sum_{i=1}^n a_i^2.
\]
Since $\sum_{i=1}^n a_i=1$ and $a_i\ge0$, we have
\[
\sum_{i=1}^n a_i^2 \le \sum_{i=1}^n a_i =1.
\]
Therefore
\[
|S_p|^2\le1,
\]
which implies
\[
|S_p|\le1.
\]
This completes the proof.

\end{proof}

\begin{remark}
Geometrically, this result states that when the vectors form a convex combination of points on the unit circle, their partial sums can always be ordered so that the cumulative trajectory remains inside the unit disk. In contrast with the unweighted case, where the optimal constant is $2$, the normalization $\sum a_i=1$ allows a much stronger confinement.
\end{remark}

\section{Conclusion}

We have presented detailed results on the polygonal confinement of partial sums of complex numbers in two dimensions. In the elementary framework, by selecting maximal subsets and interleaving vectors to balance their real and imaginary parts, we obtain the classical bound $\sqrt{5}$. Finer geometric considerations, notably restricting the vectors to a sector of width $120^\circ$, allow this result to be refined and yield a tighter bound $\sqrt{3}$ for the partial sums. The classical Steinitz theorem in two dimensions ensures a bound of $2$ for any configuration of vectors with bounded norm and zero total sum, using an inductive construction based on inner products and the selection of vectors forming an obtuse angle with the current partial sum. Moreover, coordinate-wise control and a greedy selection of vectors suggest that the optimal bound $\sqrt{2}$ may be achievable for certain configurations. If for $(z_i)$ with $|z_i|\le1$ and $\sum z_i = 0$, there exists a permutation $\sigma$ such that
\[
\forall p, \quad \left|\sum_{k=1}^p z_{\sigma(k)}\right| \le \sqrt{2},
\]
this constant would be optimal. We have also studied a weighted version of the problem. When the coefficients $a_i$ are nonnegative and normalized $\sum a_i = 1$, the vectors $z_i = a_i e^{i\theta_i}$ form a convex combination of points on the unit circle. In this setting, it is always possible to permute the vectors so that the cumulative trajectory remains within the unit disk. The inductive argument, based on selecting vectors forming an obtuse angle with the partial sum, guarantees that $|S_p| \le 1$, highlighting the effect of weighting and convexity on confinement.

Each proof has been developed step by step, with emphasis on geometric intuition and combinatorial constructions. Interleaving, alternation, and angular constraints illustrate how the order of the vectors influences the norm of the partial sums and how explicit bounds can be obtained. The results suggest natural connections with discrete Riemann sums and majorization theory, as discussed in \cite{Niculescu2018,Banaszczyk1998}, as well as with Hermite--Hadamard-type inequalities in the weighted context.

These analyses provide a comprehensive overview of the mechanisms of polygonal confinement, offer explicit constructions for universal constants, and open the way to the exploration of higher-dimensional generalizations, more complex weighted configurations, and the optimization of universal constants in broader geometric or combinatorial contexts. Finally, the detailed formulation of the steps and the highlighting of geometric ideas make the methods transparent and applicable to new situations, encouraging further research on the optimality of bounds and their connections with classical discrete inequalities \cite{Pain2026}.


\begin{thebibliography}{99}

\bibitem{Banaszczyk1998} W. Banaszczyk, Balancing vectors and Gaussian measures of n-dimensional convex bodies, {\it Random Structures Algorithms} {\bf 12} (1998), 351--360.

\bibitem{Grinberg1997} E. L. Grinberg and S. F. Sawyer, A sharp bound for Steinitz’s theorem in dimension two, {\it Linear Algebra Appl.} {\bf 252} (1997), 103--108.

\bibitem{Hardy1952} G. H. Hardy, J. E. Littlewood, and G. P\'olya, \emph{Inequalities}, 2nd edition, Cambridge University Press, 1952.

\bibitem{Marshall2011} A. W. Marshall, I. Olkin, and B. Arnold, \emph{Inequalities: Theory of Majorization and Its Applications}, 2nd edition, Springer, 2011.

\bibitem{Niculescu2018} C. P. Niculescu, Weighted Hermite-Hadamard inequalities and Riemann sums, {\it RGMIA Monographs} {\bf 21} (2018), 1--20.

\bibitem{Pain2026} J.-C. Pain, Cumulative Riemann sums, distribution functions, and a universal inequality, arXiv2603.08959, \url{https://arxiv.org/abs/2603.08959} 

\bibitem{Steinitz1913} E. Steinitz, Bedingungen für die Darstellbarkeit von Vektoren als Summen von Vektoren einer Menge, {\it J. Reine Angew. Math.} {\bf 143} (1913), 32--48.

\end{thebibliography}
\end{document}